\documentclass[11pt]{amsproc}
\usepackage{mathrsfs}
\usepackage{stmaryrd}
\usepackage{cases}
\usepackage{amssymb}
\usepackage{amsmath}
\usepackage{amsfonts}
\usepackage{graphicx}
\usepackage{amsmath,amstext,amsbsy,amssymb, color}

\newtheorem{theorem}{Theorem}[section]

\newtheorem{proposition}[theorem]{Proposition}

\theoremstyle{definition}

\theoremstyle{remark}

\numberwithin{equation}{section} \errorcontextlines=0

\newcommand{\diag}{\mbox{diag}}

\newcommand{\Pf}{\mathrm{Pf}}
\newcommand{\Hf}{\mathrm{Hf}}
\newcommand{\cdet}{\mathrm{cdet}}
\newcommand{\rdet}{\mathrm{rdet}}
\newcommand{\per}{\mathrm{per}}
\newcommand{\adj}{\mathrm{adj}}
\newcommand{\ot}{\otimes}
\newcommand{\id}{\mathrm{id}}

\begin{document}

\title[Quantum permanents and Hafnians via Pfaffians]
{Quantum permanents and Hafnians via Pfaffians}
\author{Naihuan Jing}
\address{NJ: Department of Mathematics, North Carolina State University, Raleigh, NC 27695, USA}
\email{jing@math.ncsu.edu}
\author{Jian Zhang}
\address{JZ: Institute of Mathematics and Statistics, University of Sao Paulo, Sao Paulo, Brazil 05315-970
}
\email{j.zhang1792@gmail.com}
\thanks{{\scriptsize
\hskip -0.6 true cm MSC (2010): Primary: 17B37; Secondary: 58A17, 15A75, 15B33, 15A15.
\newline Keywords: $q$-determinant, $q$-permanent, $q$-Pfaffian, $q$-Hafnian, Hopf algebras\\
Supported by NSFC (grant nos. 11271138 and 11531004), Simons Foundation (grant 198129) and a CSC fellowship.
}}

\begin{abstract} Quantum determinants and Pfaffians or permanents and Hafnians are introduced on the two
parameter quantum general linear group. Fundamental identities among quantum Pf, Hf, and det are
proved in the general setting. We
show that there are two special quantum algebras among the quantum groups, where
the quantum Pfaffians have integral Laurent polynomials as coefficients.
As a consequence, the quantum Hafnian is computed by a closely related quantum permanent and
identical to the quantum Pfaffian on this special quantum algebra.
\end{abstract}
\maketitle
\section{Introduction}

In revealing the correspondence between the bosonic and fermionic versions of the Wick formula,
Caianiello \cite{C} introduced the Hafnian of a symmetric matrix $C$ of even dimension as
\begin{equation}
\Hf(C)=\frac1{n!}\sum_{\sigma\in \Pi'}c_{\sigma(1)\sigma(2)}c_{\sigma(3)\sigma(4)}\cdots c_{\sigma(2n-1)\sigma(2n)},
\end{equation}
where $\Pi'$ consists of all permutations $\sigma\in S_{2n}$ such that $\sigma(2i-1)<\sigma(2i)$, $i=1, \ldots, n$.
It is known that the Hafnian has several properties similar to the Pfaffian \cite{LT}. We will
add one more identity $\Hf(C)=\per(A)$ for
a closely related matrix $A$ (cf. Proposition \ref{P:per-haf}), which appears to be fundamental as one notices that
$\Hf(C)\neq \sqrt{\per(C)}$ in general.
Here the permanent $\per(C)$ \cite{C} is defined
by changing all the signs to $+1$ in $\det(C)$, and sometimes also referred as the positive determinant \cite{M}.

In \cite{JZ}, we have defined the quantum Pfaffian of an even dimensional matrix $B=(b_{ij})$
with noncommutative entries by
\begin{equation}\label{e:pf1}
\Pf_q(B)=\frac1{[n]_{q^4}!}\sum_{\sigma\in \Pi'}(-q)^{l(\sigma)}b_{\sigma(1)\sigma(2)}b_{\sigma(3)\sigma(4)}\cdots b_{\sigma(2n-1)\sigma(2n)},
\end{equation}
where $l(\sigma)$ is the number of inversions of $\sigma\in S_{2n}$.
One can formally define the $q$-Hafnian of $B$ by replacing $q$ by $-q$ in (\ref{e:pf1}).
In general, quantum Pfaffians and Hafnians are polynomials with coefficients of rational functions in $\mathbb Q(q)$.

Classically the relationship between Pfaffians and Hafnians goes much deeper than formal similarity.
On the enveloping algebras of the orthogonal and symplectic Lie algebras, Molev and Nazarov \cite{MN}
found the interesting correspondence between Pfaffians and Hafnians in the study of
Capelli's identities. The goal of this note
is to show that in the coordinate ring of two-parameter general linear quantum group $\mbox{GL}_{r, s}(2n)=\langle a_{ij},
(\rdet_r)^{\pm1}\rangle$,
the quantum Pfaffians
satisfy the fundamental identities (Theorems \ref{T:identity} and \ref{T:per-haf})
\begin{equation}\label{e:ident}
\begin{aligned}
\Pf_r(A^T\mathbb J_{s^{-1}}A)&=\rdet_{r}(A)\\
=\cdet_{s^{-1}}(A)&=\Pf_{s^{-1}}(A\mathbb J_{r}A^T),
\end{aligned}
\end{equation}
where $\rdet(A)$ (resp. $\cdet(A)$) is the quantum row (resp. column) determinant of $A=(a_{ij})_{2n\times 2n}$ and
\begin{align}
\mathbb J_v&=\diag(\underbrace{J_v, \cdots, J_v}_n), \quad J_v=\begin{pmatrix} 0 & 1\\ -v & 0\end{pmatrix}.
\end{align}

 The identities in \eqref{e:ident} can be viewed as a common lift of
the classical identity between Pfaffian and determinant or Hafnian and permanent. In deriving these identities, we have
generalized the method of \cite{JZ} to use various $q$-forms to give a uniformed and general treatment
of quantum determinants and Pfaffians as well as quantum permanents and Hafnians.

We further show that there are two cases $r=\pm s^{-1}=q$ of $\mbox{GL}_{r, s}$ which have a simplified quantum Pfaffian
over $\mathbb Z[q, q^{-1}]$ thanks to the $q$-Pl\"ucker relation. One special case is the usual
quantum general linear group $\mbox{GL}_q=\mbox{GL}_{q, q^{-1}}$, and the other is the quantum group $\mbox{GL}_{q, -q^{-1}}$. On the second quantum
group both the quantum Pfaffian and Haftnian appear and
\begin{equation}
\Pf_q(A^T\mathbb J_{-q}A)=\Hf_{q}(A\mathbb J_{q}A^T)={\rdet}_q(A).
\end{equation}
This interesting phenomenon suggests that the quantum group $\mbox{GL}_{q, -q^{-1}}$
provides a context where more bosonic and fermionic identities
in quantum multilinear algebra may be found.

\section{Quantum determinants}\label{S:qdet}

\subsection{Quantum semigroup $\mathcal A_{r, s}$}
Let $r, s$ be two fixed generic numbers in the complex field $\mathbb C$. The unital algebra $\mathcal A_{r, s}$ is an associative complex algebra generated by $a_{ij}$, $1\leqslant i, j\leqslant n$ subject to the relations:
\begin{align}\label{relation a1}
&a_{ik}a_{il}=ra_{il}a_{ik}, \\\label{relation a2}
&a_{ik}a_{jk}=s^{-1}a_{jk}a_{ik}, \\\label{relation a3}
&ra_{il}a_{jk}=s^{-1}a_{jk}a_{il}, \\\label{relation a4}
&a_{ik}a_{jl}-a_{jl}a_{ik}=(r-s)a_{il}a_{jk},
\end{align}
where $i<j$ and $k<l$. The algebra $\mathcal A_{r, s}$ is a bialgebra under the comultiplication
$\mathcal A_{r, s}\longrightarrow \mathcal A_{r, s}\otimes \mathcal A_{r, s}$ given by
\begin{equation}
\Delta(a_{ij})=\sum_ka_{ik}\otimes a_{kj},
\end{equation}
and the counit given by $\varepsilon(a_{ij})=\delta_{ij}$, the Kronecker delta symbol. This bialgebra is a two-parameter quantum semigroup
(cf. \cite{JL}) that generalizes
the quantum coordinate ring of the general linear group \cite{FRT, LS}, which is the special case of $r=s^{-1}=q$.

The quantum row-determinant and column-determinant of $A$ are defined as follows.
\begin{align}\label{e:qdet}
\rdet_r(A)&=\sum_{\sigma\in S_n}(-r)^{l(\sigma)}a_{1,\sigma(1)}\cdots a_{n,\sigma(n)},\\ \label{e:qper}
\cdet_{s^{-1}}(A)&=\sum_{\sigma\in S_n}(-s)^{-l(\sigma)}a_{\sigma(1),1}\cdots a_{\sigma(n),n}.
\end{align}
 We will show that both are group-like elements (see \eqref{e:grouplike1}-\eqref{e:grouplike2}):
\begin{align*}
\Delta(\rdet_r(A))&=\rdet_r(A)\otimes \rdet_r(A),\\
\Delta(\cdet_{s^{-1}}(A))&=\cdet_{s^{-1}}(A)\otimes \cdet_{s^{-1}}(A).
\end{align*}
In fact, we have the following result \cite{JL} (announced in \cite{T}). For our purpose, we will
give a different proof using the technique of $q$-forms. 
\begin{theorem}\label{T:identity}
In the bialgebra $\mathcal A_{r, s}$ one has that
$$\rdet_r(A)=\cdet_{s^{-1}}(A),$$
and moreover,
$$\rdet_r(A)a_{ij}=(rs)^{j-i}a_{ij}\rdet_r(A).$$
\end{theorem}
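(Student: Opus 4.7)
The paper's reference to ``$q$-forms'' suggests encoding both $\rdet_r(A)$ and $\cdet_{s^{-1}}(A)$ as coefficients of top-degree forms in auxiliary quantum exterior algebras. I would introduce the right $q$-exterior algebra $\Lambda_r$ generated by $\xi_1,\dots,\xi_n$ with $\xi_i^2=0$ and $\xi_i\xi_j=-r^{-1}\xi_j\xi_i$ ($i<j$), and the left algebra $\Lambda_{s^{-1}}$ generated by $\eta_1,\dots,\eta_n$ with $\eta_i^2=0$ and $\eta_j\eta_i=-s^{-1}\eta_i\eta_j$ ($i<j$), declaring that $\xi$'s, $\eta$'s and the $a_{ij}$ commute across the three factors of $\Lambda_{s^{-1}}\otimes\mathcal A_{r,s}\otimes\Lambda_r$. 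Setting $y_i:=\sum_k a_{ik}\xi_k$, a short direct check converts the defining relations (\ref{relation a1})--(\ref{relation a4}) into $y_i^2=0$ and $y_iy_j=-r^{-1}y_jy_i$ for $i<j$. Iterating and using the sign identity $\xi_{\sigma(1)}\cdots\xi_{\sigma(n)}=(-r)^{l(\sigma)}\xi_1\cdots\xi_n$ yields
\[
y_1 y_2 \cdots y_n \;=\; \rdet_r(A)\,\xi_1\xi_2\cdots\xi_n,
\]
and the mirror calculation with $z_j:=\sum_k\eta_k a_{kj}$ gives $z_1 z_2\cdots z_n = (\eta_1\cdots\eta_n)\,\cdet_{s^{-1}}(A)$.

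To deduce $\rdet_r(A)=\cdet_{s^{-1}}(A)$, I would compare the two determinant sums term by term. For each $\sigma\in S_n$, the row-form monomial $a_{1,\sigma(1)}\cdots a_{n,\sigma(n)}$ can be rewritten in the column-form order $a_{\sigma^{-1}(1),1}\cdots a_{\sigma^{-1}(n),n}$ by bubble-sorting the factors on their column index. Each adjacent swap across an inversion of $\sigma$ is governed by (\ref{relation a3}) and introduces a factor of $(rs)^{-1}$; after $l(\sigma)$ such swaps, and using $l(\sigma)=l(\sigma^{-1})$, the prefactor $(-r)^{l(\sigma)}$ becomes exactly $(-s)^{-l(\sigma)}$, so the two sums agree summand by summand.

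For the commutation relation, I would right-multiply the equality $y_1\cdots y_n=\rdet_r(A)\,\xi_1\cdots\xi_n$ by $a_{ij}$ and push $a_{ij}$ leftward through each $y_k$ by applying (\ref{relation a1})--(\ref{relation a4}); the $\xi_\ell$ pass through $a_{ij}$ freely. Passing $a_{ij}$ through $y_k=\sum_\ell a_{k\ell}\xi_\ell$ produces a scalar multiple of $y_k$ plus a ``cross'' correction from (\ref{relation a3}) or (\ref{relation a4}) in which a row or column index is duplicated; such cross terms are annihilated by the surrounding wedge $y_1\cdots\widehat{y_k}\cdots y_n$ because of $y_\ell^2=0$ and $y_\ell y_m=-r^{-1}y_my_\ell$. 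Only pure scalars should survive, and accumulating their contributions from rows $k<i$, $k=i$, and $k>i$ is expected to produce the factor $(rs)^{j-i}$.

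The main obstacle is this final accounting: the scalar contributed by each row depends on the three cases $\ell<j$, $\ell=j$, $\ell>j$ and on whether $k$ is above, at, or below row $i$; tracking all of these and verifying the cross-term cancellation takes care because of the inhomogeneous $(r-s)$ term in (\ref{relation a4}). If the direct approach becomes unwieldy, a cleaner fallback is to derive a quantum Laplace expansion for $\rdet_r(A)$ from the $q$-form construction above and obtain the commutation relation as a corollary, at the cost of first developing the cofactor calculus for $\mathcal A_{r,s}$.
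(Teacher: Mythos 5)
Your argument for the first identity $\rdet_r(A)=\cdet_{s^{-1}}(A)$ is correct and genuinely different from the paper's. The paper forms the mixed element $\Phi=x^TA^Ty$ in $\mathcal A_{r,s}\otimes\Lambda\otimes\Lambda'$ and evaluates $\wedge^n\Phi$ in two ways, via $\omega_i=x_i\partial_i$ and via $\omega_i'=\delta_iy_i$, each family satisfying $\omega_j\wedge\omega_i=\frac rs\,\omega_i\wedge\omega_j$, so that both evaluations give $[n]_{\frac rs}!$ times the respective determinant times the same top form. Your bubble-sort comparison is more elementary: in sorting $a_{1,\sigma(1)}\cdots a_{n,\sigma(n)}$ by column index, every adjacent swap involves a pair whose left factor has the smaller row and the larger column, so only relation \eqref{relation a3} is ever invoked, each swap costs exactly $(rs)^{-1}$, and no $(r-s)$ corrections from \eqref{relation a4} arise; combined with $l(\sigma)=l(\sigma^{-1})$ this matches the two sums summand by summand. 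That part is a complete and arguably cleaner proof.

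The second identity is where your proposal has a genuine gap. You do not carry out the ``final accounting,'' and the step you rely on --- that the cross terms are ``annihilated by the surrounding wedge'' because ``a row or column index is duplicated'' --- is not true factor by factor: commuting $a_{ij}$ past a single $a_{k\ell}$ via \eqref{relation a4} produces a term proportional to $a_{i\ell}a_{kj}$, in which $a_{ij}$ itself no longer appears and no index is visibly repeated. To kill such terms one must first resum over $\ell$ so that the correction assembles into a multiple of the row-$i$ linear form $\sum_\ell a_{i\ell}\xi_\ell$, and only then annihilate it against the copy of that form already present in the product, with separate bookkeeping for $k<i$, $k=i$, $k>i$ and for the column-$j$ contributions governed by \eqref{relation a1}--\eqref{relation a3}. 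This is precisely the computation the paper avoids: its route is your ``fallback.'' It derives the Laplace expansions \eqref{e:Lap01} and \eqref{e:Lap2} from the wedge identities for minors, packages them as the quantum Cramer rule $A^{\tau}_r\adj(A)=\adj(A)A^{\tau}_{s^{-1}}=\rdet_r(A)I$ (Proposition \ref{p:cramer1}), and then obtains the commutation relation in one line by associativity, $A^{\tau}_r\,\rdet_r(A)=A^{\tau}_r\adj(A)A^{\tau}_{s^{-1}}=\rdet_r(A)\,A^{\tau}_{s^{-1}}$, which entrywise is $a_{ij}\rdet_r(A)=(rs)^{i-j}\rdet_r(A)a_{ij}$. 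Since you neither complete the direct commutation computation nor develop the cofactor calculus you defer to, the second statement of the theorem remains unproved in your proposal.
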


To prove this we
introduce two copies of commuting
quantum exterior algebras. The first one is
$$\Lambda(n)=\mathbb C\langle x_1, \ldots,x_n\rangle /I$$
where $I$ is the ideal
$\left( x_i^2, rx_ix_j+x_jx_i|1\leqslant i<j\leqslant n\right)$ and one writes $x\wedge y=x\otimes y\mod I$. Then we have that
\begin{align}\label{qwedge1}
&x_j \wedge x_i=-rx_i\wedge x_j, \\
&x_i\wedge x_i=0, \label{qwedge2}
\end{align}
where $i<j$. The algebra $\Lambda=\Lambda(n)$ is naturally $\mathbb Z$-graded by $\deg(x_i)=1$ and
the top degree subspace is one-dimensional and spanned by
$x_1\wedge\cdots \wedge x_n$. Assume that ${a_{ij}}$'s commute with
${x_i}$'s. We will simply write the general monomial element $a\otimes x$ as $ax$, where $a\in \mathcal A_{r, s}, x\in\Lambda$.
Let
\begin{equation}
\delta_i=\sum_{j=1}^na_{ij}x_j,
\end{equation}
then
$\delta_i$ also satisfy (\ref{qwedge1})-(\ref{qwedge2}). Therefore
\begin{equation}
\delta_1\wedge\cdots\wedge\delta_n=\rdet_r(A)x_1\wedge\cdots\wedge x_n.
\end{equation}

The second quantum exterior algebra $\Lambda'(n)$ is the unital associative algebra
$\mathbb C\langle y_1, \ldots,y_n\rangle/J$, where $J$ is the ideal
$(y_i^2, s^{-1}y_iy_j+y_jy_i | 1\leqslant i<j\leqslant n)$. Using similar convention
for $x_i$'s, the relations are
\begin{align}\label{qwedge3}
&y_j \wedge y_i=-s^{-1}y_i\wedge y_j, \\
&y_i\wedge y_i=0, \label{qwedge4}
\end{align}
where $1\leqslant i<j\leqslant n$. The algebra $\Lambda'$ is also $\mathbb Z$-graded by $\deg(y_i)=1$
with the top degree 1-dimensional subspace spanned by the single vector $y_1\wedge\cdots\wedge y_n$.
Assume that
${y_i}$'s commute with $a_{ij}$ and set
\begin{equation}
\partial_i=\sum_{j=1}^na_{ji}y_j,
\end{equation}
then
$\partial_i$ satisfy the quantum exterior relations (\ref{qwedge3})-(\ref{qwedge4}).  Subsequently
\begin{equation}
\partial_1\wedge\cdots\wedge \partial_n=\cdet_{s^{-1}}(A)y_1\wedge\cdots\wedge y_n.
\end{equation}

The algebra $\Lambda'$ is a right $\mathcal A_{r, s}$-comodule with the coaction $\rho:\Lambda'\longrightarrow \Lambda'\otimes\mathcal A_{r, s}$ given by
$\rho(y_i)=\partial_{i}$.
The comomule identity  $(\rho\otimes \id)\rho=(\id\ot\Delta)\rho$  
leads to the following.
\begin{equation}\label{e:grouplike1}
\begin{aligned}
&(\rho\otimes\id)\rho(y_1\wedge\cdots\wedge y_n)\\
&=(\rho\otimes\id)\partial_1\wedge\cdots\wedge \partial_n\\
&=(\rho\otimes\id)y_1\wedge\cdots\wedge y_n\ot {\cdet}_{s^{-1}}(A)\\
&=y_1\wedge\cdots\wedge y_n\ot{\cdet}_{s^{-1}}(A)\ot {\cdet}_{s^{-1}}(A)
\end{aligned}
\end{equation}
and
\begin{equation}\label{e:grouplike2}
\begin{aligned}
&(\id\ot\Delta)\rho(y_1\wedge\cdots\wedge y_n)\\
&=(\id\ot\Delta)y_1\wedge\cdots\wedge y_n\ot {\cdet}_{s^{-1}}(A)\\
&=y_1 \wedge \cdots \wedge y_n\ot\Delta({\cdet}_{s^{-1}}(A)).\\
\end{aligned}
\end{equation}
Therefore $$\Delta({\cdet}_{s^{-1}}(A))={\cdet}_{s^{-1}}(A)\ot{\cdet}_{s^{-1}}(A).$$
i.e. ${\cdet}_{s^{-1}}(A)$ is a group-like element.

If we put $x=(x_1, \ldots, x_n)^T$, $y=(y_1, \ldots, y_n)^T$, $\delta=(\delta_1, \ldots, \delta_n)^T$ and
$\partial=(\partial_1, \ldots, \partial_n)^T$£¬ then
\begin{align}
\delta &=Ax, \\
\partial &=A^Ty,
\end{align}
where $A=(a_{ij})$. Therefore $\mathcal A_{r, s}$ can be viewed as a set of $r$-linear and $s$-linear transformations preserving the quantum exterior
algebras (\ref{qwedge1})-(\ref{qwedge2}) respectively. The following is a generalization of Manin's result \cite{Ma} for the one-parameter
quantum group.

\begin{proposition} The entries of the matrix $A$ satisfy the relations (\ref{relation a1})-(\ref{relation a4}) of the quantum algebra $\mathcal A_{r, s}$
 if and only if $\delta_i=\sum_ja_{ij}x_j$
and $\partial_i=\sum_ja_{ji}y_j$ satisfy the relations (\ref{qwedge1})-(\ref{qwedge2})
and (\ref{qwedge3})-(\ref{qwedge4}) respectively.
\end{proposition}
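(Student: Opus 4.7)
The plan is to reduce both implications to a direct comparison of coefficients in the degree-two parts of $\Lambda$ and $\Lambda'$. Starting from the definitions and rewriting with $x_k\wedge x_k=0$ and $x_l\wedge x_k=-r\,x_k\wedge x_l$ (and the analogous $y$-rules), one obtains
\[
\delta_i\wedge\delta_j=\sum_{k<l}\bigl(a_{ik}a_{jl}-r\,a_{il}a_{jk}\bigr)x_k\wedge x_l,
\quad
\partial_i\wedge\partial_j=\sum_{k<l}\bigl(a_{ki}a_{lj}-s^{-1}a_{li}a_{kj}\bigr)y_k\wedge y_l.
\]
Because $\{x_k\wedge x_l\}_{k<l}$ and $\{y_k\wedge y_l\}_{k<l}$ are bases of the degree-two components of $\Lambda$ and $\Lambda'$, any exterior identity on the $\delta_i$'s or $\partial_i$'s is equivalent to the vanishing of every coefficient in these expansions.

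Specialising $i=j$, the condition $\delta_i\wedge\delta_i=0$ reads $a_{ik}a_{il}=r\,a_{il}a_{ik}$ for $k<l$, which is exactly \eqref{relation a1}; analogously $\partial_i\wedge\partial_i=0$ recovers \eqref{relation a2} after relabelling. For $i<j$ the two identities $\delta_j\wedge\delta_i+r\,\delta_i\wedge\delta_j=0$ and $\partial_j\wedge\partial_i+s^{-1}\partial_i\wedge\partial_j=0$ each contribute, for every $k<l$, one linear equation in the three quantities $a_{jk}a_{il}$, $a_{il}a_{jk}$, and $a_{ik}a_{jl}-a_{jl}a_{ik}$, namely
\begin{align*}
a_{jk}a_{il}-r^{2}a_{il}a_{jk} &= -r\bigl(a_{ik}a_{jl}-a_{jl}a_{ik}\bigr),\\
a_{jk}a_{il}-s^{2}a_{il}a_{jk} &= \phantom{-}s\bigl(a_{ik}a_{jl}-a_{jl}a_{ik}\bigr).
\end{align*}
Since $r$ and $s$ are generic, $r+s\neq 0$, and eliminating $a_{jk}a_{il}$ from this pair yields precisely \eqref{relation a4}, i.e.\ $a_{ik}a_{jl}-a_{jl}a_{ik}=(r-s)a_{il}a_{jk}$; substituting this back gives $a_{jk}a_{il}=rs\,a_{il}a_{jk}$, which is \eqref{relation a3}. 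The converse direction is immediate: feeding \eqref{relation a1}--\eqref{relation a4} into the two expansions above makes every coefficient vanish, so the exterior relations hold.

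The essential point---and the only step that is not a routine substitution---is that a single exterior algebra cannot do the job: $\Lambda$ detects only the parameter $r$ and $\Lambda'$ only $s^{-1}$, so both $\{\delta_i\}$ and $\{\partial_i\}$ must be used simultaneously in order to disentangle the two off-diagonal relations \eqref{relation a3} and \eqref{relation a4}. This is precisely the two-parameter refinement of Manin's original one-parameter argument, where the coincidence $r=s^{-1}=q$ allows one quantum exterior algebra alone to determine all the quantum-group relations.
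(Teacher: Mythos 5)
Your proof is correct. The paper itself states this proposition without proof (deferring to Manin's one-parameter argument), and your coefficient comparison in the degree-two components of $\Lambda$ and $\Lambda'$ is exactly the computation being left implicit: the diagonal conditions $\delta_i\wedge\delta_i=0$ and $\partial_i\wedge\partial_i=0$ give \eqref{relation a1} and \eqref{relation a2}, while the two off-diagonal exterior relations give a pair of linear equations whose elimination yields \eqref{relation a3} and \eqref{relation a4}. Your observation that both quantum exterior algebras are genuinely needed to disentangle \eqref{relation a3} from \eqref{relation a4} is accurate and worth recording, as is the (harmless, given the paper's standing genericity hypothesis) reliance on $r+s\neq 0$ in the elimination step.
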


We can now prove Theorem \ref{T:identity}.
Suppose that $\Lambda$, $\Lambda'$ and $A$ commute with each other
and generate a $\mathbb Z\times \mathbb Z$-bigraded algebra $\mathcal A=\langle\mathcal A_{r, s}, \Lambda, \Lambda'\rangle=\mathcal A_{r, s}\ot\Lambda\ot\Lambda'$
with $\mathcal A_{(i,j)}=\mathcal A_{r, s}\ot\Lambda_i\ot\Lambda'_j$, where $\Lambda_i$ (resp. $\Lambda_i'$) is
the degree $i$ subspace of $\Lambda$ (resp.  $\Lambda'$).
Consider the following special linear element $\Phi$ in $\mathcal A_{r, s}\otimes \Lambda\otimes \Lambda'$:
\begin{equation}
\Phi=\sum_{i,j=1}^n a_{ji}x_iy_j=x^TA^Ty \in \mathcal A_{(1, 1)}.
\end{equation}

Let $\omega_i=x_i\partial_i=\sum_{j=1}^n a_{ji}x_iy_j\in\mathcal A_{(1, 1)}$. If follows from
 (\ref{qwedge1})-(\ref{qwedge2}) and the commutation relations of $\partial_i$ that
\begin{align}\label{e:ext1}
&\omega_i \wedge \omega_i=0, \quad i=1, \ldots, n, \\ \label{e:ext2}
&\omega_j \wedge \omega_i=\frac rs \omega_i \wedge w_j, \quad 1\leqslant i<j\leqslant n.
\end{align}
Note that $\Phi=\sum_{i=1}^n\omega_i$. Using (\ref{e:ext1})-(\ref{e:ext2}), we have that
\begin{align*}
\wedge^n\Phi &=(\sum_{\sigma\in S_n}(\frac rs)^{l(\sigma)})\omega_1\wedge\cdots\wedge\omega_n \\
&=[n]_{\frac rs}!(x_1\wedge\cdots\wedge x_n)(\partial_1\wedge\cdots\wedge\partial_n)\\
&=[n]_{\frac rs}!\cdet_{s^{-1}}(A) (x_1\wedge\cdots\wedge x_n)(  y_1\wedge\cdots\wedge y_n),
\end{align*}
where $[n]_v!=[n]_v\cdots [1]_v$, $[n]_v=1+v+\cdots+v^{n-1}$ for any variable $v$.
Similarly we can write $\Phi=\sum_{i=1}^n\omega_i'$ with $\omega_i'=\delta_iy_i=\sum_{j=1}^n a_{ij}x_jy_i$.
It is easy to see that
the elements $\omega_i'$ satisfy the same quantum exterior algebra (\ref{e:ext1})-(\ref{e:ext2}). Thus
\begin{align*}
\wedge^n\Phi &=[n]_{\frac rs}!\omega_1'\wedge\cdots\wedge\omega_n' \\
&=[n]_{\frac rs}!(\delta_1\wedge\cdots\wedge\delta_n)(y_1\wedge\cdots\wedge y_n)\\
&=[n]_{\frac rs}!\rdet_r(A) (x_1\wedge\cdots\wedge x_n)(  y_1\wedge\cdots\wedge y_n).
\end{align*}
Subsequently we get the first identity of Theorem \ref{T:identity}:
$$\rdet_r(A)=\cdet_{s^{-1}}(A).$$

For a pair of $t$ indices $i_1, \ldots, i_t$ and $j_1, \ldots, j_t$,
we define the quantum row-minor determinant $\det_r(A_{j_1 \ldots j_t}^{i_1 \ldots i_t})$
as in (\ref{e:qdet}). Then
\begin{align}\label{e:minor1}
\delta_{i_1}\wedge\cdots\wedge\delta_{i_t}=\sum_{j_1<\cdots<j_t}\rdet_r(A^{i_1\ldots i_t}_{j_1\ldots j_t})x_{j_1}\wedge\cdots\wedge x_{i_t},
\end{align}
which implies that $\det_r(A^{i_1\ldots i_t}_{j_1\ldots j_t})=0$ whenever there are two identical rows.

For indices $1\leqslant i_1<\cdots<i_t, i_{t+1}<\cdots<i_n\leqslant n$,
it is readily seen that
$$\delta_{i_1}\wedge\cdots\wedge\delta_{i_t}\wedge \delta_{i_{t+1}}\wedge\cdots\wedge\delta_{i_n}
=(-r)^{i_1+\cdots+i_t-\frac{t(t+1)}2}\delta_1\wedge\cdots\wedge\delta_n.
$$
Note that $x_j$'s also satisfy the same wedge relations. These
then imply the following Laplace expansion by invoking \eqref{e:minor1}:
\begin{equation*}
\rdet_r(A)=\sum_{\substack{j_1<\cdots<j_t\\ j_{t+1}<\cdots<j_n}}(-r)^{(j_1+\cdots+j_t)-
(i_1+\cdots+i_t)}\rdet_r(A_{j_1 \ldots j_t}^{i_1 \ldots i_t})\rdet_r(A_{j_{t+1} \ldots j_n}^{i_{t+1}\ldots i_n}).
\end{equation*}
In particular, for fixed $i, k$
\begin{align}\label{e:Lap01}
\rdet_r(A)\delta_{ik}=\sum_{j=1}^n(-r)^{j-i}a_{ij}\rdet_r (A^{\hat{k}}_{\hat{j}})=\sum_{j=1}^n(-r)^{i-j}\rdet_r(A^{\hat{k}}_{\hat{j}})a_{ij},
\end{align}
where $\hat{i}$ means the omission of $i$ and here we use it to denote the indices $1, \ldots, i-1, i+1, \ldots, n$ for brevity.

As for the quantum (column) determinant or column-minor, we also have
that $\cdet_{s^{-1}}(A^{i_1\ldots i_t}_{j_1\ldots j_t})=0$ whenever there are two identical columns. The corresponding Laplace expansion
for a permutation $(i_1\ldots i_n)$ of $n$ such that $i_1<\cdots<i_r, i_{r+1}<\cdots<i_n$
\begin{align*}
&\cdet_{s^{-1}}(A)\\
&=\sum_{\substack{j_1<\cdots<j_r\\ j_{r+1}<\cdots<j_n}}(-s)^{(i_1+\cdots+i_r)-(j_1+\cdots+j_r)}
\cdet_{s^{-1}}(A^{j_1 \ldots j_r}_{i_1 \ldots i_r})\cdet_{s^{-1}}(A^{j_{r+1} \ldots j_n}_{i_{r+1}\ldots i_n}).
\end{align*}
In particular, we have that for fixed $i, k$
\begin{equation}\label{e:Lap2}
\begin{aligned}
\cdet_{s^{-1}}(A)\delta_{ik}&=\sum_{j=1}^n(-s)^{i-j}a_{ji}\cdet_{s^{-1}}(A_{\hat{k}}^{\hat{j}})\\
&=\sum_{j=1}^n(-s)^{j-i}\cdet_{s^{-1}}(A^{\hat{j}}_{\hat{k}})a_{ji}.
\end{aligned}
\end{equation}

Like the determinant, the minor row determinant is also equal to the column minor determinant
for any pairs of ordered indices $1\leqslant i_1<\cdots<i_t\leqslant n$ and $1\leqslant j_1<\cdots<j_t
\leqslant n$
\begin{equation}\label{minor}
\rdet_r(A^{i_1\ldots i_t}_{j_1\ldots j_t})=\cdet_{s^{-1}}(A^{i_1\ldots i_t}_{j_1\ldots j_t}).
\end{equation}

Let $D_v=\diag(v, v^2, \ldots, v^n)$, and define the inner automorphism $\tau$ of
$\mbox{Mat}_n(\mathcal{A}_{r, s})$ via conjugation by $D_v$ as follows: for any $C=(c_{ij})\in \mbox{Mat}_n(\mathcal{A}_{r, s})$
\begin{align*}
C^{\tau}_v=D^{-1}_vCD_v=(v^{j-i}c_{ij}).
\end{align*}

Let $\adj(A)=((-1)^{i-j}\rdet_r(A_{\hat{i}}^{\hat{j}}))$ be the quantum adjoint matrix associated to
$A$.
Then the Laplace expansions (\ref{e:Lap01})-(\ref{e:Lap2}) give the first version of the {\it quantum Cramer's rule} in $\mathcal A_{r, s}$.
\begin{proposition}\label{p:cramer1} Over the quantum semigroup $\mathcal A_{r, s}(n)$, one has that
\begin{align}\label{e:cramer1}
A^{\tau}_r\adj(A)&=\adj(A)A^{\tau}_{s^{-1}}=\rdet_r(A)I,\\
A(\adj(A))^{\tau}_{r^{-1}}&=(\adj(A))^{\tau}_{s}A=\rdet_r(A)I,
\end{align}
where $I$ is the identity matrix of size $n$.
\end{proposition}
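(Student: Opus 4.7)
The plan is to verify each of the four matrix equalities by computing the $(i,k)$-entry of the relevant product and matching it against one of the Laplace expansions \eqref{e:Lap01}, \eqref{e:Lap2}. Since each expansion already carries a factor $\delta_{ik}$, both the diagonal value $\rdet_r(A)$ and the off-diagonal vanishing are produced at once. Theorem \ref{T:identity} and the minor identity \eqref{minor} are invoked freely to swap $\rdet_r$ with $\cdet_{s^{-1}}$ whenever the side on which the entries of $A$ appear makes the column expansion more natural than the row one.

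For example, the $(i,k)$-entry of $A^{\tau}_r\adj(A)$ is
\begin{equation*}
\sum_{j=1}^n r^{j-i}a_{ij}(-1)^{j-k}\rdet_r(A^{\hat k}_{\hat j})=(-r)^{i-k}\sum_{j=1}^n(-r)^{j-i}a_{ij}\rdet_r(A^{\hat k}_{\hat j})=\rdet_r(A)\delta_{ik},
\end{equation*}
the last equality following from the left form of \eqref{e:Lap01} together with $(-r)^{i-k}\delta_{ik}=\delta_{ik}$. For $\adj(A)A^{\tau}_{s^{-1}}$ one first uses \eqref{minor} to rewrite each $\rdet_r(A^{\hat j}_{\hat i})$ as $\cdet_{s^{-1}}(A^{\hat j}_{\hat i})$, which brings the $(i,k)$-entry into the shape of a scalar multiple of the right form of \eqref{e:Lap2} with its two free indices interchanged, evaluating to $\cdet_{s^{-1}}(A)\delta_{ik}=\rdet_r(A)\delta_{ik}$ by Theorem \ref{T:identity}.

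The second line $A(\adj(A))^{\tau}_{r^{-1}}=(\adj(A))^{\tau}_sA=\rdet_r(A)I$ is proved by the mirror-image computations, using the right form of \eqref{e:Lap01} for the first product and the left form of \eqref{e:Lap2} (after \eqref{minor}) for the second. The main obstacle is purely bookkeeping: because $a_{ij}$ does not commute with quantum minors, one must choose for each of the four products the matching left- or right-variant of the appropriate Laplace expansion, and the conjugation powers introduced by $\tau$ must be absorbed into the $(-r)^{\bullet}$ or $(-s)^{\bullet}$ coefficients before the expansion becomes recognizable.
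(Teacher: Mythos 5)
Your proposal takes exactly the paper's route: the paper proves Proposition \ref{p:cramer1} by nothing more than reading off the entries of the products from the Laplace expansions \eqref{e:Lap01}--\eqref{e:Lap2}, which is the entrywise verification you spell out, and your treatment of the first line (including the use of \eqref{minor} and Theorem \ref{T:identity} to pass to $\cdet_{s^{-1}}$ for the product $\adj(A)A^{\tau}_{s^{-1}}$) is correct in substance. Two slips are worth fixing. First, cosmetically, the factor you extract in the displayed computation should be $(-1)^{i-k}$ rather than $(-r)^{i-k}$, since $r^{j-i}(-1)^{j-k}=(-1)^{i-k}(-r)^{j-i}$; this is harmless only because the factor ends up multiplying $\rdet_r(A)\delta_{ik}$. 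Second, and more seriously, your assignment of Laplace-expansion variants for the second line is backwards: in $A(\adj(A))^{\tau}_{r^{-1}}$ the entries $a_{ij}$ again stand to the \emph{left} of the minors $\rdet_r(A^{\hat k}_{\hat j})$, and in $(\adj(A))^{\tau}_{s}A$ the entries $a_{jk}$ again stand to the \emph{right}, exactly as in the first line; so one must use the \emph{same} variants of \eqref{e:Lap01} and \eqref{e:Lap2} as before, only with the conjugation power $r^{j-k}$ (resp.\ $s^{j-i}$) now coming from the twisted adjugate. As literally stated, invoking ``the right form of \eqref{e:Lap01}'' would require moving $a_{ij}$ past a quantum minor with which it does not commute, so that step would fail. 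A cleaner way to get the second line, consistent with the remark the paper makes after Proposition \ref{p:cramer2}, is to conjugate the first line by the scalar diagonal matrices $D_r$ and $D_{s^{-1}}$: since $D_vCD_v^{-1}=C^{\tau}_{v^{-1}}$ and $D_v$ commutes with $\rdet_r(A)I$, the identity $A^{\tau}_r\adj(A)=\rdet_r(A)I$ immediately yields $A\,(\adj(A))^{\tau}_{r^{-1}}=\rdet_r(A)I$, and likewise for the other product.
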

 Using the quantum Cramer's rule, we have that
\begin{align}\label{e:cramer2}
A^{\tau}_r\rdet_r(A)=A^{\tau}_r\adj(A)A^{\tau}_{s^{-1}}=\rdet_r(A)A^{\tau}_{s^{-1}},
\end{align}
therefore $a_{ij}\det_r(A)=(rs)^{i-j}\det_r(A)a_{ij}$, which is the second identity in Theorem \ref{T:identity}.

\subsection{Quantum group $\mbox{GL}_{r, s}(n)$} The second identity in Theorem \ref{e:ident} implies that
$\rdet_r(A)$ is a regular element in the ring $\mathcal{A}_{r, s}$, therefore we can define
the localization $\mathcal A_{r, s}[{\det}_{r}^{-1}]$, which will be denoted as $\mbox{GL}_{r, s}(n)$.

Although in general $A^{\tau}_r\neq A^{\tau}_{s^{-1}}$, this does not
prevent the quantum Cramer's rule \eqref{e:cramer1} from giving an inverse element for $A$ in $\mbox{Mat}_n(\mbox{GL}_{r, s})$.
In fact, \eqref{e:cramer2} gives the following identity:
\begin{align}
(\rdet_r(A))^{-1}A^{\tau}_r=A^{\tau}_{s^{-1}}(\rdet_r(A))^{-1}.
\end{align}
So the quantum Cramer's rule can be rewritten as
\begin{proposition}\label{p:cramer2} On the quantum group $\mbox{GL}_{r, s}$ one has that
\begin{align}\label{e:cramer3}
(\rdet_r(A))^{-1}A^{\tau}_r\cdot\adj(A)&=\adj(A)\cdot A^{\tau}_{s^{-1}}(\rdet_r(A))^{-1}=I,\\ \label{e:cramer4}
A\cdot\adj(A)^{\tau}_{r^{-1}}(\rdet_r(A))^{-1}&=(\rdet_r(A))^{-1}\adj(A)^{\tau}_{s}\cdot A=I,
\end{align}
or equivalently in $\mathrm{Mat}_n(\mathrm{GL}_{r, s})$
\begin{align}
A^{-1}=\rdet_r(A)^{-1}\adj(A)^{\tau}_{s}=\adj(A)^{\tau}_{r^{-1}}\rdet_r(A)^{-1}.
\end{align}
\end{proposition}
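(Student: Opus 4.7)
The plan is to derive Proposition \ref{p:cramer2} as an immediate corollary of Proposition \ref{p:cramer1} once we pass to the localization $\mathrm{GL}_{r,s}=\mathcal A_{r,s}[\rdet_r^{-1}]$. The preliminary observation is that the normality identity $\rdet_r(A)\,a_{ij}=(rs)^{j-i}a_{ij}\,\rdet_r(A)$ from Theorem \ref{T:identity} shows that $\rdet_r(A)$ satisfies the Ore condition, so its inverse exists on both sides and satisfies $a_{ij}\,\rdet_r(A)^{-1}=(rs)^{j-i}\rdet_r(A)^{-1}\,a_{ij}$. Consequently, the powers of $r$ produced by conjugation by $D_r$ on one side are absorbed into powers of $s^{-1}$ produced by conjugation by $D_{s^{-1}}$ on the other. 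This is precisely the content of the twisted commutation identity $(\rdet_r(A))^{-1}A^{\tau}_r=A^{\tau}_{s^{-1}}(\rdet_r(A))^{-1}$ recorded just above the statement.

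With this commutation in hand, I would reduce each of the four identities to a formal manipulation of Proposition \ref{p:cramer1}. For \eqref{e:cramer3}, left-multiplying $A^{\tau}_r\adj(A)=\rdet_r(A)\,I$ by $\rdet_r(A)^{-1}$ gives $(\rdet_r(A))^{-1}A^{\tau}_r\cdot\adj(A)=I$, while right-multiplying $\adj(A)A^{\tau}_{s^{-1}}=\rdet_r(A)\,I$ by $\rdet_r(A)^{-1}$ gives $\adj(A)\cdot A^{\tau}_{s^{-1}}(\rdet_r(A))^{-1}=I$; the twisted commutation then guarantees that the two relevant products of $\rdet_r(A)^{-1}$ with the twisted matrix agree, so both expressions are $I$. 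The identity \eqref{e:cramer4} is obtained by the mirror argument starting from $A(\adj(A))^{\tau}_{r^{-1}}=(\adj(A))^{\tau}_{s}A=\rdet_r(A)\,I$.

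To finish, the equivalent matrix form $A^{-1}=\rdet_r(A)^{-1}\adj(A)^{\tau}_{s}=\adj(A)^{\tau}_{r^{-1}}\rdet_r(A)^{-1}$ is read off directly: $(\adj(A))^{\tau}_{s}A=\rdet_r(A)\,I$ exhibits $\rdet_r(A)^{-1}\adj(A)^{\tau}_{s}$ as a left inverse of $A$, while $A(\adj(A))^{\tau}_{r^{-1}}=\rdet_r(A)\,I$ exhibits $\adj(A)^{\tau}_{r^{-1}}\rdet_r(A)^{-1}$ as a right inverse. Since one-sided inverses must coincide in the matrix ring $\mathrm{Mat}_n(\mathrm{GL}_{r,s})$, both expressions equal the two-sided inverse $A^{-1}$. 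The only point requiring care is the bookkeeping of the powers of $r$ and $s$ inserted by the diagonal twists $\tau$; because this is already encoded in the normality relation of Theorem \ref{T:identity}, no calculation is needed beyond that routine check in the Ore localization.
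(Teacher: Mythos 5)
Your proposal is correct and follows essentially the same route as the paper: localize at the regular element $\rdet_r(A)$, use the normality relation from Theorem \ref{T:identity} to obtain the twisted commutation $(\rdet_r(A))^{-1}A^{\tau}_r=A^{\tau}_{s^{-1}}(\rdet_r(A))^{-1}$, and then multiply the identities of Proposition \ref{p:cramer1} by $\rdet_r(A)^{-1}$ on the appropriate sides. The only cosmetic difference is that the paper deduces \eqref{e:cramer4} from \eqref{e:cramer3} by applying the automorphism $\tau^{-1}$ associated to $D_r^{-1}$ or $D_s$, whereas you derive it directly from the second line of Proposition \ref{p:cramer1}; both are immediate.
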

The second identity \eqref{e:cramer4} is obtained from \eqref{e:cramer3} by applying the automorphism $\tau^{-1}$ associated
to $D_{r}^{-1}$ or $D_s$ to both sides respectively.

By defining the antipode
\begin{align*}
S(a_{ij})&=(-s)^{j-i}\cdet_{s^{-1}}(A)^{-1}\cdet_{s^{-1}}(A_{\hat{i}}^{\hat{j}})\\
&=(-r)^{i-j}\rdet_r(A_{\hat{i}}^{\hat{j}})\rdet_r(A)^{-1}
\end{align*}
the bialgebra $\mbox{GL}_{r, s}=\mathcal A_{r, s}[\rdet_r^{-1}]$ becomes a Hopf algebra, thus a quantum group
in the sense of Drinfeld. In fact, $AS(A)=S(A)A=I$ follows from Proposition \ref{p:cramer2}, therefore
$(\id\otimes S)\Delta=(S\otimes \id)\Delta=\varepsilon$.

We remark that the quantum semigroup $\mathcal{A}_{r, s}$ offers new identities to quantum multilinear algebra
studied in \cite{JZ} (see \cite{LT} for several interesting identities in the classical situation;
also see \cite{TT, HH, IW, Z, G} for the usual quantum group $\mbox{GL}_q$).
Moreover, if $r, s$ are indeterminates over the field $F$ of characteristic $0$, then the quantum group
$\mathcal A_{r, s}$ can be defined over the rational field $F(r, s)$, and all results in the paper hold
in that situation.

\section{Quantum Pfaffians and Hafnians}

\subsection{$r$-Pfaffians.} Let $B=(b_{ij})$ be a $2n\times 2n$ square matrix with noncommutative entries,
and let $\mathcal B$ be the associative algebra generated by $b_{ij}, i<j$. Assume that $b_{ij}$ commute with the algebra $\Lambda=\Lambda(2n)$ generated by $x_1, \ldots, x_{2n}$,
and consider the algebra $\mathcal B\ot\Lambda$.

Let $\Omega=\sum_{1\leqslant i<j\leqslant 2n}b_{ij}(x_i\wedge x_j)\in \mathcal B_q\ot\Lambda$. The quantum Pfaffian $\Pf_r(B)$ is defined by
\begin{equation}
\wedge ^n\Omega=[n]_{r^4}!\Pf_r(B)x_1\wedge x_2\wedge\cdots \wedge x_{2n}.
\end{equation}

Explicitly the quantum Pfaffian is given by \cite{JZ}:
\begin{align}
\Pf_r(B)
=\frac{1}{[n]_{r^4}!}\sum_{\sigma\in \Pi'}(-r)^{l(\sigma)}b_{\sigma(1)\sigma(2)}b_{\sigma(3)\sigma(4)}\cdots b_{\sigma(2n-1)\sigma(2n)},
\end{align}
where $\Pi'$ is the set of permutations $\sigma$ of $2n$ such that
$\sigma(1)<\sigma(2), \ldots, \sigma(2n-1)<\sigma(2n)$.

In the quantum coordinate ring $\mathcal A_{r, s}(2n)$, we set for $1\leqslant i, j\leqslant 2n$
\begin{equation}\label{e:quadgen}
b_{ij}=\sum_{m=1}^n \cdet_{s^{-1}}(A^{2m-1, 2m}_{i, j})
=\sum_{m=1}^n(a_{2m-1,i}a_{2m,j}-s^{-1}a_{2m,i}a_{2m-1,j}).
\end{equation}
The matrix $B=(b_{ij})$ can be compactly written as
\begin{equation}
B=A^T\mathbb J_{s^{-1}}A,
\end{equation}
where for any variable $s$
\begin{align}\label{e:mat-j}
\mathbb J_{s^{-1}}=\mathbb J_{s^{-1}}(n)=\diag(\underbrace{J_{s^{-1}}, \cdots, J_{s^{-1}}}_n), \quad J_{s^{-1}}=\begin{pmatrix} 0 &1\\ -s^{-1} & 0\end{pmatrix}.
\end{align}

The special case of the following result at $r=s^{-1}=q$ was proved \cite{JR} under a stronger assumption using
representation theory, and later obtained via the quantum exterior algebra \cite{JZ}. We will
derive the generalization under a weaker assumption.

\begin{theorem} \label{T:det-pf} On the quantum coordinate ring $\mathcal A_{r, s}(2n)$ one has that
$${\Pf}_r(A^T\mathbb J_{s^{-1}}A)=\rdet_r(A),$$
where $A^T\mathbb J_{s^{-1}}A=(b_{ij})$ and $b_{ij}$ are defined by (\ref{e:quadgen}).
\end{theorem}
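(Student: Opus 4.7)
The plan is to extend the quantum-exterior-algebra argument of Theorem \ref{T:identity} from vectors to bivectors. In $\mathcal{A}_{r,s}(2n)\otimes\Lambda(2n)$, keep $\delta_i=\sum_j a_{ij}x_j$ and introduce the bivectors
\[
\eta_m=\delta_{2m-1}\wedge\delta_{2m}\in\Lambda_2,\qquad 1\leqslant m\leqslant n.
\]
The first step is to identify $\Omega=\sum_{m=1}^n\eta_m$. Expanding each $\eta_m$ with the relations (\ref{qwedge1})--(\ref{qwedge2}) gives $\eta_m=\sum_{i<j}\rdet_r(A^{2m-1,2m}_{i,j})\,x_i\wedge x_j$; the minor identity (\ref{minor}) for $t=2$ converts each row $r$-minor into the corresponding column $s^{-1}$-minor, so summing over $m$ recovers $\sum_{i<j}b_{ij}\,x_i\wedge x_j=\Omega$.

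The second step is to determine how the $\eta_m$'s interact in $\Lambda$. Using $\delta_i\wedge\delta_i=0$ and $\delta_j\wedge\delta_i=-r\,\delta_i\wedge\delta_j$ for $i<j$, a direct calculation (four adjacent $\delta$-transpositions are needed to move one bivector past another) yields
\[
\eta_m\wedge\eta_m=0,\qquad \eta_{m'}\wedge\eta_m=r^4\,\eta_m\wedge\eta_{m'}\quad (m<m').
\]
Thus the bivectors generate an $r^4$-analog of an exterior algebra, whose natural factorial is exactly $[n]_{r^4}!$.

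The third step is the main computation
\[
\wedge^n\Omega=\sum_{\sigma\in S_n}\eta_{\sigma(1)}\wedge\cdots\wedge\eta_{\sigma(n)}=[n]_{r^4}!\,\eta_1\wedge\cdots\wedge\eta_n=[n]_{r^4}!\,\delta_1\wedge\cdots\wedge\delta_{2n},
\]
which equals $[n]_{r^4}!\,\rdet_r(A)\,x_1\wedge\cdots\wedge x_{2n}$ by the Laplace-type formula used for Theorem \ref{T:identity}. Matching against the defining identity $\wedge^n\Omega=[n]_{r^4}!\,\Pf_r(B)\,x_1\wedge\cdots\wedge x_{2n}$ then yields the theorem.

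The main obstacle lies in the second step: although the verification is elementary, the exponent $r^4$ must fall out exactly in order to harmonize with the $[n]_{r^4}!$ normalization hard-wired into $\Pf_r$. Once the bivector commutation is pinned down, the rest is bookkeeping enabled by the minor identity (\ref{minor}), which is precisely what allows the $s^{-1}$-matrix $\mathbb{J}_{s^{-1}}$ to appear even though the Pfaffian itself is an $r$-object.
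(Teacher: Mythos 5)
Your proof is correct and follows essentially the same route as the paper's: the paper likewise rewrites $\Omega$ as $\delta_1\wedge\delta_2+\cdots+\delta_{2n-1}\wedge\delta_{2n}$ (using the equality of row $r$-minors and column $s^{-1}$-minors to match the $b_{ij}$ of \eqref{e:quadgen}), records the relations $(\delta_{2i-1}\wedge\delta_{2i})^2=0$ and $(\delta_{2j-1}\wedge\delta_{2j})(\delta_{2i-1}\wedge\delta_{2i})=r^4(\delta_{2i-1}\wedge\delta_{2i})(\delta_{2j-1}\wedge\delta_{2j})$ for $i<j$, and concludes $\wedge^n\Omega=[n]_{r^4}!\,\rdet_r(A)\,x_1\wedge\cdots\wedge x_{2n}$. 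Your three steps are exactly this argument, with the bivector commutation and the $t=2$ minor identity made explicit.
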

\begin{proof} Note that for $1\leqslant i<j\leqslant 2n$
$$
b_{ij}=\sum_{m=1}^n \rdet_{r}(A^{2m-1, 2m}_{i, j})
=\sum_{m=1}^n(a_{2m-1,i}a_{2m,j}-ra_{2m-1,j}a_{2m,i}).
$$
With the $b_{ij}$, by the quantum exterior relations
of $x_i$'s it follows that the 2-form $\Omega$ can also be written as
\begin{equation}
\Omega=\delta_1\wedge\delta_2+\delta_3\wedge\delta_4+\cdots+\delta_{2n-1}\wedge\delta_{2n},
\end{equation}
where $\delta_i=\sum_{j=1}^{2n} a_{ij}x_j \ (i =1,2, . . . , 2n)$ and they obey (\ref{qwedge1}) and (\ref{qwedge2}). Subsequently
\begin{align}
&(\delta_{2j-1}\wedge \delta_{2j})(\delta_{2i-1}\wedge \delta_{2i})=r^4 (\delta_{2i-1}\wedge \delta_{2i})(\delta_{2j-1}\wedge \delta_{2j}), \ i<j\\
&(\delta_{2i-1}\wedge \delta_{2i})^2=0.
\end{align}
Therefore
\begin{equation}\label{e:ph1}
\wedge^n\Omega=[n]_{r^4}!\delta_1\wedge\cdots\wedge\delta_{2n}=[n]_{r^4}!\rdet_r(A)x_1\wedge\cdots\wedge x_{2n},
\end{equation}
where the last identity uses the wedge formulation of $\det_r(A)$ in $\mathcal A_{r, s}(2n)$.
Hence $\det_r(A)=\Pf_r(B).$
\end{proof}

\subsection{$s^{-1}$-Pfaffians.}

Similar to the $r$-Pfaffian, we utilize another 2-form to define the
$s^{-1}$-Hafnian in $\mathcal B$. Let
\begin{equation}
\Omega'=\sum_{1\leqslant i<j\leqslant 2n}b_{ij}y_i\wedge y_j,
\end{equation}
where $y_i$ satisfy the quantum exterior relations \eqref{qwedge3}-\eqref{qwedge4} and commute with $b_{ij}$'s.
The $s^{-1}$-Hafnian of $B$ is then defined by
\begin{equation}
\wedge ^n\Omega'=[n]_{s^{-4}}!\Pf_{s^{-1}}(B)y_1\wedge\cdots \wedge y_{2n}.
\end{equation}

Using the quantum exterior relations, we have the explicit formula:
\begin{align}
\Pf_{s^{-1}}(B)
&=\frac{1}{[n]_{s^{-4}}!}\sum_{\sigma\in \Pi'}(-s)^{-l(\sigma)}b_{\sigma(1)\sigma(2)}b_{\sigma(3)\sigma(4)}\cdots b_{\sigma(2n-1)\sigma(2n)}.
\end{align}

We also have the second identity between the quantum Pfaffian and quantum determinant.
\begin{theorem} \label{T:per-haf} In the quantum coordinate ring $\mathcal A_{r, s}(2n)$ we have
$$
{\Pf}_{s^{-1}}(A\mathbb J_{r}A^T)=\cdet_{s^{-1}}(A).
$$
where $A\mathbb J_{r}A^T=(b_{ij}')_{2n\times 2n}$ and for  $ i<j$
$$b_{ij}'=\sum_{m=1}^n\rdet_{r}(A^{i, j}_{2m-1, 2m})=\sum_{m=1}^n(a_{i,2m-1}a_{j,2m}-ra_{i,2m}a_{j,2m-1}).
$$
\end{theorem}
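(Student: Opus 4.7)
The plan is to imitate the proof of Theorem \ref{T:det-pf} with the roles of the two quantum exterior algebras swapped: work inside $\mathcal B \otimes \Lambda'$ rather than $\mathcal B \otimes \Lambda$, and use the column-like forms $\partial_i = \sum_{j=1}^{2n} a_{ji} y_j$, which satisfy the relations \eqref{qwedge3}--\eqref{qwedge4}.

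First, I would verify that the matrix entries admit a second presentation: using the relation \eqref{relation a3} $r a_{i,2m} a_{j,2m-1} = s^{-1} a_{j,2m-1} a_{i,2m}$ (valid for $i<j$ since $2m-1<2m$), one gets
\begin{equation*}
b'_{ij} = \sum_{m=1}^n (a_{i,2m-1} a_{j,2m} - r a_{i,2m} a_{j,2m-1}) = \sum_{m=1}^n (a_{i,2m-1} a_{j,2m} - s^{-1} a_{j,2m-1} a_{i,2m}).
\end{equation*}
This second form is precisely what is needed so that, by a short computation using $y_j \wedge y_i = -s^{-1} y_i \wedge y_j$, one may rewrite the defining 2-form as
\begin{equation*}
\Omega' = \sum_{1\leqslant i<j\leqslant 2n} b'_{ij}\, y_i \wedge y_j = \sum_{m=1}^n \partial_{2m-1} \wedge \partial_{2m}.
\end{equation*}

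Next, set $\eta_m = \partial_{2m-1} \wedge \partial_{2m}$. Since the $\partial_i$ obey the same quantum exterior relations as the $y_i$, each swap of two $\partial$'s contributes a factor $-s^{-1}$, so
\begin{align*}
\eta_m \wedge \eta_m &= 0, \\
\eta_j \wedge \eta_i &= s^{-4}\, \eta_i \wedge \eta_j, \qquad 1\leqslant i<j\leqslant n,
\end{align*}
the second coming from the four swaps $(-s^{-1})^4 = s^{-4}$. Expanding $\wedge^n \Omega' = (\sum_m \eta_m)^{\wedge n}$ in the same fashion as in Theorem \ref{T:det-pf} but with $r$ replaced by $s^{-1}$ gives
\begin{equation*}
\wedge^n \Omega' = [n]_{s^{-4}}!\, \partial_1 \wedge \partial_2 \wedge \cdots \wedge \partial_{2n} = [n]_{s^{-4}}!\, \cdet_{s^{-1}}(A)\, y_1\wedge\cdots\wedge y_{2n},
\end{equation*}
where the last equality is the definition of $\cdet_{s^{-1}}(A)$ in terms of the $\partial_i$. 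Comparing with the defining relation $\wedge^n \Omega' = [n]_{s^{-4}}!\, \Pf_{s^{-1}}(B')\, y_1\wedge\cdots\wedge y_{2n}$ yields the theorem.

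The only real obstacle is the first step, namely verifying that the commutation relations of $\mathcal A_{r,s}$ convert the ``$r$-form'' of $b'_{ij}$ into the ``$s^{-1}$-form'' needed for compatibility with the $y$-wedge product; everything after that is a mechanical transcription of the argument for Theorem \ref{T:det-pf} with $(r, x, \delta) \leftrightarrow (s^{-1}, y, \partial)$.
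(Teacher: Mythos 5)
Your proposal is correct and follows essentially the same route as the paper: rewriting $\Omega'$ as $\sum_{m}\partial_{2m-1}\wedge\partial_{2m}$ and expanding $\wedge^n\Omega'$ via the $s^{-4}$-commuting two-forms to produce $[n]_{s^{-4}}!\,\cdet_{s^{-1}}(A)\,y_1\wedge\cdots\wedge y_{2n}$. You are in fact more explicit than the paper on the one nontrivial point, namely using relation \eqref{relation a3} to pass from the $r$-form to the $s^{-1}$-form of $b'_{ij}$, which the paper leaves implicit in its appeal to the method of Theorem \ref{T:det-pf}.
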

\begin{proof} This is proved by a similar method to that of Theorem \ref{T:det-pf}. In fact, note that
the 2-form $\Omega'$ can be rewritten as
\begin{equation}
\Omega'=\partial_1\wedge\partial_2+\partial_3\wedge\partial_4+\cdots+\partial_{2n-1}\wedge\partial_{2n},
\end{equation}
where $\partial_i=\sum_{j=1}^{2n} a_{ji}x_j (i =1,2, . . . , 2n)$, and $\partial_i$'s obey the
quantum exterior algebra \eqref{e:cramer3}-\eqref{e:cramer4}. Therefore
\begin{align*}
\wedge^n\Omega'&=[n]_{s^{-4}}!\partial_1\wedge\cdots\wedge\partial_{2n}\\
&=[n]_{s^{-4}}!\cdet_{s^{-1}}(A)y_1 \wedge y_2 \wedge \cdots\wedge y_{2n},
\end{align*}
which implies that $\cdet_{s^{-1}}(A)={\Pf}_{s^{-1}}(B').$
\end{proof}

Since $\rdet_r(A)=\cdet_{s^{-1}}(A)$ on the quantum semigroup $\mathcal A_{r, s}(2n)$, we have the following result.
\begin{theorem} For $i<j$,
let $b_{ij}=\sum_{m=1}^n\cdet_{s^{-1}}(A^{2m-1, 2m}_{i, j})$ and
$b_{ij}'=\sum_{m=1}^n\rdet_{r}(A^{i, j}_{2m-1, 2m})$ in the quantum semigroup $\mathcal A_{r, s}(2n)$,
then one has that
\begin{equation*}
{\Pf}_r(B)={\Pf}_{s^{-1}}(B').
\end{equation*}
\end{theorem}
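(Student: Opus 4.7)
The plan is to recognize that the statement is an immediate consequence of the three preceding results, and essentially no new computation is required. First I would observe that the matrix $B=(b_{ij})$ defined via $b_{ij}=\sum_{m=1}^n\cdet_{s^{-1}}(A^{2m-1,2m}_{i,j})$ is, by the identification in equation (\ref{e:quadgen}), exactly the matrix $A^T\mathbb J_{s^{-1}}A$. Likewise, the matrix $B'=(b_{ij}')$ defined via $b_{ij}'=\sum_{m=1}^n\rdet_{r}(A^{i,j}_{2m-1,2m})$ is precisely the matrix $A\mathbb J_r A^T$ appearing in Theorem \ref{T:per-haf}. Thus the hypothesis merely names the two quadratic expressions already studied, and the problem reduces to comparing their Pfaffians.

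Next I would invoke Theorem \ref{T:det-pf}, which gives $\Pf_r(B)=\Pf_r(A^T\mathbb J_{s^{-1}}A)=\rdet_r(A)$, and Theorem \ref{T:per-haf}, which gives $\Pf_{s^{-1}}(B')=\Pf_{s^{-1}}(A\mathbb J_r A^T)=\cdet_{s^{-1}}(A)$. These are the two halves of the identity chain in (\ref{e:ident}) evaluated at the same matrix $A$ inside the two-parameter bialgebra $\mathcal A_{r,s}(2n)$.

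The final step is to bridge the right-hand sides using Theorem \ref{T:identity}, which asserts $\rdet_r(A)=\cdet_{s^{-1}}(A)$ in $\mathcal A_{r,s}$. Chaining the three equalities yields
\begin{equation*}
\Pf_r(B)=\rdet_r(A)=\cdet_{s^{-1}}(A)=\Pf_{s^{-1}}(B'),
\end{equation*}
which is the claim. There is no real obstacle: the only thing to verify is the bookkeeping of the two equivalent expressions for $b_{ij}$ in $\mathcal A_{r,s}$, namely $\sum_m\cdet_{s^{-1}}(A^{2m-1,2m}_{i,j})=\sum_m\rdet_{r}(A^{2m-1,2m}_{i,j})$, which is already guaranteed by the minor version of Theorem \ref{T:identity} recorded in (\ref{minor}) and used in the proof of Theorem \ref{T:det-pf}. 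So the proof is essentially a one-line corollary of the preceding structural theorems.
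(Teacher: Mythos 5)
Your proposal is correct and is exactly the paper's argument: the theorem is stated as an immediate corollary of Theorems \ref{T:det-pf} and \ref{T:per-haf} combined with the identity $\rdet_r(A)=\cdet_{s^{-1}}(A)$ from Theorem \ref{T:identity}. No further commentary is needed.
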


\subsection{Quantum Pfaffians and Hafnians}
When $r=\pm s^{-1}=q$, the quantum Pfaffian has a simplified expression and is
an element of $\mbox{GL}_{q, \pm q^{-1}}$ with coefficients in $\mathbb Z[q, q^{-1}]$.
The case of $r=s^{-1}=q$ reduces to the usual quantum group $\mbox{GL}_q$ and was discussed in details in \cite{JZ},
while $r=-s^{-1}$ gives rise to
another quantum group on which both the quantum Pfaffian and Hafnian appear.
We focus on this special case of $r=-s^{-1}=q$.

Let $\overline{\mathcal B}_q$ be the unital associative algebra generated by
$b_{ij}$, $1\leqslant i<j\leqslant 2n$ subject to the quadratic
relations
\begin{equation}\label{reb}
b_{ij}b_{kl}-qb_{ik}b_{jl}+q^{2}b_{il}b_{jk}
=b_{kl}b_{ij}-q^{-1}b_{jl}b_{ik}+q^{-2}b_{jk}b_{il},
\end{equation}
for $1\leqslant i<j<k<l\leqslant 2n$. We refer them as the {\it $q$-Maya relations} or
{\it $q$-Pl\"ucker relations}, since the special case of $q=1$ can be recast
as a Young diagram identity connected to Maya diagrams \cite{H}.

First of all, using a similar method of \cite{JZ} we have the following simplification of the quantum Pfaffian.

\begin{proposition} On the algebra $\overline{\mathcal B}_q$, the quantum Pfaffian $\Pf_q$ is simply given by
\begin{equation}
\Pf_q(B)=\sum_{\sigma\in \Pi}(-q)^{l(\sigma)}b_{\sigma(1)\sigma(2)}b_{\sigma(3)\sigma(4)}\cdots b_{\sigma(2n-1)\sigma(2n)},
\end{equation}
where $\Pi$ is the set of permutations $\sigma\in S_{2n}$ such
that $\sigma(1)<\sigma(2), \ldots, \sigma(2n-1)<\sigma(2n)$ and $\sigma(1)<\sigma(3)<\cdots<\sigma(2n-1)$.
\end{proposition}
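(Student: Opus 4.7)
The plan is to proceed by induction on $n$, with the $q$-Pl\"ucker relation \eqref{reb} supplying the essential algebraic input. A preliminary observation is that $|\Pi'|=n!\,|\Pi|$: concretely, $\Pi'$ decomposes into $|\Pi|$ orbits of size $n!$ under the $S_n$-action permuting the $n$ ordered pairs of a given $\sigma$, and each orbit contains a unique canonical representative in $\Pi$ (the one with $\sigma(1)<\sigma(3)<\cdots<\sigma(2n-1)$). The target identity asserts that the signed orbit sum in $\overline{\mathcal B}_q$ collapses to $[n]_{q^4}!$ times the canonical term, since $[n]_{q^4}!$ is a $q$-analogue of $n!$.

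The case $n=1$ is immediate. For $n=2$ on four indices $a<b<c<d$, I would tabulate the six elements of $\Pi'$ with their inversion numbers (giving signs $1,-q,q^2$ for the canonical terms in $\Pi$ and $q^4,-q^3,q^2$ for their pair-swapped partners) and verify that
\begin{equation*}
\sum_{\sigma\in\Pi'}(-q)^{l(\sigma)}b_{\sigma(1)\sigma(2)}b_{\sigma(3)\sigma(4)}
-(1+q^4)\sum_{\sigma\in\Pi}(-q)^{l(\sigma)}b_{\sigma(1)\sigma(2)}b_{\sigma(3)\sigma(4)}
\end{equation*}
reduces, after extracting a factor of $q^4$, to exactly the $q$-Pl\"ucker relation \eqref{reb}. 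This anchors the induction and shows that the $n=2$ statement of the proposition is equivalent to \eqref{reb}.

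For the inductive step, I would use the $n=2$ identity as a local move: given $\sigma\in\Pi'\setminus\Pi$, pick $j$ with $\sigma(2j-1)>\sigma(2j+1)$ and apply the four-index simplification to the adjacent block $b_{\sigma(2j-1)\sigma(2j)}b_{\sigma(2j+1)\sigma(2j+2)}$. This rewrites the block as a $\mathbb Z[q,q^{-1}]$-combination of products over the three matchings on those four indices with canonically ordered pairs. Iterating these local moves strictly decreases the number of ``pair inversions'' (inversions of the sequence $\sigma(1),\sigma(3),\ldots,\sigma(2n-1)$), while the induction hypothesis on the remaining $2n-2$ indices handles each resulting subproduct. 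Tracking the $q^4$-bookkeeping across the orbit then produces the factor $[n]_{q^4}!=[n]_{q^4}[n-1]_{q^4}!$ inductively.

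The main obstacle is the combinatorial bookkeeping: the $q$-Pl\"ucker relations do not yield a clean $q^4$-commutation of disjoint $b_{ij},b_{kl}$ but couple products across three matchings on $\{i,j,k,l\}$. One must therefore verify that the correction terms generated by a local swap, which are supported on matchings different from the starting one, cancel exactly against corrections arising from the corresponding moves on other elements of $\Pi'$. The cleanest organization I foresee is to argue, via the $n=2$ case as an $S_2$-symmetrization on each adjacent pair of positions, that the total $\Pi'$-sum is $S_n$-invariant under pair permutation up to the overall factor $[n]_{q^4}!$; equating this symmetrized expression with $[n]_{q^4}!$ times the $\Pi$-sum then gives the claimed formula for $\Pf_q(B)$.
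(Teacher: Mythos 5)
Your overall strategy---induction on $n$ with the $q$-Pl\"ucker relation \eqref{reb} as the engine---is the right one (the paper itself only defers to the method of \cite{JZ} for this proposition), and your base case is correct: for four indices $a<b<c<d$ the difference between the $\Pi'$-sum and $(1+q^4)$ times the $\Pi$-sum is exactly $q^4$ times (right-hand side minus left-hand side of) \eqref{reb}, so the $n=2$ statement is equivalent to the $q$-Maya relation. The gap is in the inductive step, and it is precisely the point you flag as ``the main obstacle'' without resolving. The relation \eqref{reb} is a six-term identity equating two specific three-term combinations; it does \emph{not} let you rewrite a single block $b_{\sigma(2j-1)\sigma(2j)}b_{\sigma(2j+1)\sigma(2j+2)}$ with $\sigma(2j-1)>\sigma(2j+1)$ as a $\mathbb Z[q,q^{-1}]$-combination of canonically ordered products. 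So the ``local move'' you describe is not a well-defined operation on individual terms of the $\Pi'$-sum: before \eqref{reb} can be applied you must first show that the terms of the $\Pi'$-sum group into exact copies of one side of \eqref{reb}, with the coefficients $1,-q^{\pm1},q^{\pm2}$ supplied by the inversion statistic $l(\sigma)$. Establishing that the global sum decomposes this way is essentially the content of the proposition, not a routine consequence of the $n=2$ case.

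Your fallback---symmetrization, i.e.\ that the $\Pi'$-sum is ``$S_n$-invariant under pair permutation up to $[n]_{q^4}!$''---meets a concrete obstruction. If one indexes the pairs of each matching canonically by their minima, the three matchings $\{ab,cd\},\{ac,bd\},\{ad,bc\}$ of a four-index block do not receive compatible canonical indices: the second block-pair has minimum $c$ in the first matching but $b$ in the other two, so whenever some other pair of the matching has its minimum strictly between $b$ and $c$, the three matchings of a local class occupy different position-pairs in the ordered product, and the reduction to the $n=2$ identity fails term by term. A workable repair is to expand the $\Pi'$-sum along the first pair $(\sigma(1),\sigma(2))=(i,j)$, which gives $\sum_{i<j}(-q)^{i+j-3}b_{ij}\Sigma_{\hat i\hat j}$ with $\Sigma_{\hat i\hat j}$ the $\Pi'$-sum on the complementary $2n-2$ indices; the induction hypothesis converts $\Sigma_{\hat i\hat j}$ into $[n-1]_{q^4}!$ times the reduced $\Pi$-sum, and one is left to prove a Laplace-type identity reducing $\sum_{i<j}$ to $[n]_{q^4}\sum_{j}(-q)^{j-2}b_{1j}(\cdots)$ by repeated use of \eqref{reb}. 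That remaining identity is where the real work lies, and the proposal as written does not contain it, so the proof is incomplete.
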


By a direct computation as in \cite{JZ}, the elements $b_{ij}$'s given
in \eqref{e:quadgen} can be shown to satisfy the $q$-Maya relations. We remark that
the $b_{ij}$ defined by \eqref{e:quadgen} in the two-parameter quantum semigroup $\mathcal A_{r, s}$ does not
satisfy the quantum Maya relation unless $r^2s^2=1$.
Moreover, the quantum Pfaffian can also be computed iteratively
as follows. A proof can be similarly given as in \cite{JZ}.
\begin{proposition} On the algebra $\overline{\mathcal B}_q$ we have
\begin{equation}
\Pf_q(B)=\sum_{j=2}^{2n}(-q)^{j-2}\Pf_q(B^{1j}_{1j})\Pf_q(B^{2, \ldots, \hat{j},\ldots,2n}_{2, \ldots, \hat{j},\ldots,2n}).
\end{equation}
\end{proposition}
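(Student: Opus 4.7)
The plan is to derive the expansion directly from the simplified Pfaffian formula established in the preceding proposition. Writing
$$\Pf_q(B) = \sum_{\sigma \in \Pi} (-q)^{l(\sigma)} b_{\sigma(1)\sigma(2)} b_{\sigma(3)\sigma(4)} \cdots b_{\sigma(2n-1)\sigma(2n)},$$
I would first observe that the two defining conditions of $\Pi$, namely $\sigma(2i-1) < \sigma(2i)$ for each $i$ together with $\sigma(1) < \sigma(3) < \cdots < \sigma(2n-1)$, imply $\sigma(1) < \sigma(k)$ for every $k > 1$, and hence force $\sigma(1) = 1$.

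Next I would partition $\Pi$ according to the value $j := \sigma(2) \in \{2, 3, \ldots, 2n\}$. For a fixed $j$, the assignment sending $\sigma$ to the bijection $\sigma' \colon \{1, \ldots, 2n-2\} \to \{2, \ldots, \hat{j}, \ldots, 2n\}$ obtained by shifting positions $3, \ldots, 2n$ of $\sigma$ down by two produces a bijection between $\{\sigma \in \Pi : \sigma(2) = j\}$ and the index set defining $\Pf_q(B^{2, \ldots, \hat{j}, \ldots, 2n}_{2, \ldots, \hat{j}, \ldots, 2n})$: the pairwise inequalities $\sigma(2i-1) < \sigma(2i)$ for $i \ge 2$ and the odd chain $\sigma(3) < \sigma(5) < \cdots < \sigma(2n-1)$ required of $\sigma'$ are both inherited from the defining constraints of $\Pi$.

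A short inversion count then yields $l(\sigma) = (j - 2) + l(\sigma')$. The pair $(1, k)$ never contributes, since $\sigma(1) = 1$ is the overall minimum; the pair $(2, k)$ with $k \ge 3$ contributes exactly when $\sigma(k) \in \{2, 3, \ldots, j - 1\}$, which gives precisely $j - 2$ inversions; and the remaining inversions involving positions $i, k \ge 3$ correspond bijectively to inversions of $\sigma'$. Pulling out the factor $b_{\sigma(1)\sigma(2)} = b_{1j}$ from the left, which requires no commutation in $\overline{\mathcal B}_q$ since it is already the leftmost factor in each monomial, the inner sum over $\{\sigma \in \Pi : \sigma(2) = j\}$ collapses to $\Pf_q(B^{2, \ldots, \hat{j}, \ldots, 2n}_{2, \ldots, \hat{j}, \ldots, 2n})$. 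Noting that $\Pf_q(B^{1j}_{1j}) = b_{1j}$ then gives the claimed expansion.

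The only real obstacle is the inversion bookkeeping in the third step, which is elementary but must be verified explicitly. No deeper use of the quantum commutation relations in $\overline{\mathcal B}_q$ is needed beyond the simplified Pfaffian formula from the previous proposition; the argument is essentially a rewriting of that formula, and in particular no analogue of the 2-form computation used in Theorem \ref{T:det-pf} is required.
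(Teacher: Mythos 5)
Your argument is correct, and it is essentially the expected one: the paper itself gives no proof of this proposition, deferring to the method of \cite{JZ}, and the natural route is exactly what you do --- expand the simplified formula from the preceding proposition, observe that the two conditions defining $\Pi$ force $\sigma(1)=1$, partition by $j=\sigma(2)$, and do the inversion bookkeeping $l(\sigma)=(j-2)+l(\sigma')$, which you carry out correctly (the pair $(1,k)$ contributes nothing, the pair $(2,k)$ contributes exactly the $j-2$ values $2,\ldots,j-1$ sitting to the right of $j$, and the remaining inversions are those of $\sigma'$). The only step you use implicitly and should state is that the inner sum over $\{\sigma\in\Pi:\sigma(2)=j\}$ may be identified with $\Pf_q(B^{2,\ldots,\hat j,\ldots,2n}_{2,\ldots,\hat j,\ldots,2n})$ via the \emph{simplified} formula applied to the submatrix; this is legitimate because the $q$-Maya relations \eqref{reb} for indices drawn from $\{2,\ldots,\hat j,\ldots,2n\}$ are a subset of the defining relations of $\overline{\mathcal B}_q$, so the subalgebra generated by the entries of the submatrix again satisfies them and the preceding proposition applies to it. With that one sentence added, your proof is complete and self-contained, and arguably more explicit than the paper's pointer to \cite{JZ}.
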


Let $\overline{\mathcal B}'_q$ be the algebra generated by
$b_{ij}'$ for $1\leqslant i<j\leqslant 2n$ modulo the ideal generated by the $(-q)$-Maya
relations
\begin{equation}\label{reb2}
b_{ij}'b_{kl}'+qb_{ik}'b_{jl}'+q^{2}b_{il}'b_{jk}'
=b_{kl}'b_{ij}'+q^{-1}b_{jl}'b_{ik}'+q^{-2}b_{jk}'b_{il}',
\end{equation}
where $1\leqslant i<j<k<l\leqslant 2n$.

Similar to the quantum determinant and Pfaffian, we introduce the quantum (column)-permanent and Hafnian by
\begin{align}\label{e:qper2}
\per_{q}(A)&=\sum_{\sigma\in S_n}q^{l(\sigma)}a_{\sigma(1),1}a_{\sigma(2),2}\cdots a_{\sigma(n),n}\\ \label{e:qhf2}
\Hf_q(B)
&=\frac{1}{[n]_{q^4}!}\sum_{\sigma\in \Pi'}q^{l(\sigma)}b_{\sigma(1)\sigma(2)}b_{\sigma(3)\sigma(4)}\cdots b_{\sigma(2n-1)\sigma(2n)}.
\end{align}

Similar to the quantum Pfaffian, we can simplify the definition of the quantum Hafnian.
\begin{proposition} On the algebra $\overline{\mathcal B}'_q$, the quantum Hafnian $\Hf_q$ is simply given by
\begin{equation}
\Hf_q(B')=\sum_{\sigma\in \Pi}q^{l(\sigma)}b'_{\sigma(1)\sigma(2)}b'_{\sigma(3)\sigma(4)}\cdots b'_{\sigma(2n-1)\sigma(2n)},
\end{equation}
where $\Pi=\{\sigma\in S_{2n}|\sigma(2i-1)<\sigma(2i), \sigma(1)<\sigma(3)<\cdots<\sigma(2n-1)\}$.
\end{proposition}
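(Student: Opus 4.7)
The plan is to mirror the proof of the preceding simplification for $\Pf_q$ on $\overline{\mathcal B}_q$. Formally, $\Hf_q$ is obtained from $\Pf_q$ by substituting $q\mapsto -q$ in the summand, and the $(-q)$-Maya relations \eqref{reb2} are exactly the image of the $q$-Maya relations \eqref{reb} under that same substitution. Hence the combinatorial reorganization carried out in \cite{JZ} for the Pfaffian should transfer essentially verbatim, the role of the $(-q)$-weights being taken by the $q$-weights.

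Concretely, I would begin with the defining formula
\[
\Hf_q(B')=\frac{1}{[n]_{q^4}!}\sum_{\sigma\in\Pi'}q^{l(\sigma)}\,b'_{\sigma(1)\sigma(2)}\,b'_{\sigma(3)\sigma(4)}\cdots b'_{\sigma(2n-1)\sigma(2n)}
\]
and decompose the index set $\Pi'$ as $\Pi\times S_n$ via the bijection $\sigma\mapsto(\sigma_0,w)$, where $\sigma_0\in\Pi$ sorts the $n$ unordered pairs $\{\sigma(2i-1),\sigma(2i)\}$ so that their smaller entries are strictly increasing, and $w\in S_n$ records the block permutation relating $\sigma$ to $\sigma_0$. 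Under this decomposition $l(\sigma)=l(\sigma_0)+4\,l(w)$, since each adjacent swap of two blocks of size two contributes four inversions. The crux is then to establish that, modulo the ideal defining $\overline{\mathcal B}'_q$,
\[
\sum_{w\in S_n} q^{4\,l(w)}\,b'_{\sigma_0 w(1)\sigma_0 w(2)}\cdots b'_{\sigma_0 w(2n-1)\sigma_0 w(2n)}=[n]_{q^4}!\;b'_{\sigma_0(1)\sigma_0(2)}\cdots b'_{\sigma_0(2n-1)\sigma_0(2n)}
\]
for each fixed $\sigma_0\in\Pi$. Substituting this identity back and cancelling the prefactor $[n]_{q^4}!$ yields the claimed formula.

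The main obstacle is justifying this reorganization, because \eqref{reb2} is a three-term identity rather than a two-term commutation: interchanging $b'_{ij}b'_{kl}$ with $b'_{kl}b'_{ij}$ picks up genuine correction terms involving $b'_{ik}b'_{jl}$ and $b'_{il}b'_{jk}$, so one cannot simply claim $b'_{ij}b'_{kl}\equiv q^{-4}b'_{kl}b'_{ij}$. Following \cite{JZ}, I would argue inductively on $n$: when an adjacent block transposition is performed inside the product, the extra Maya terms can be paired against contributions coming from other $w'\in S_n$ in the sum, or else regrouped into expressions which vanish by the inductive hypothesis (they contain repeated indices after the sub-Hafnian expansion). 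The $+q$ signs in \eqref{reb2} are calibrated exactly so that this telescoping produces a clean net factor of $q^4$ per adjacent block swap, matching the $q^{l(\sigma)}$ weighting. Once this step is completed the proof concludes by direct substitution, exactly as in the Pfaffian case.
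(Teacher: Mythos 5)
There is a genuine gap, and it occurs at the very first combinatorial step. Your decomposition $\Pi'\cong\Pi\times S_n$ is fine as a bijection of index sets, but the asserted weight factorization $l(\sigma)=l(\sigma_0)+4\,l(w)$ is false: swapping two adjacent blocks $(a,b)$ and $(c,d)$ (with $a<b$, $c<d$, $a<c$) changes the inversion number by $4$, $2$, or $0$ according as $b<c$, $c<b<d$, or $d<b$. Already for $n=2$, the permutation $\sigma=2413$ has $l(\sigma)=3$ while its sorted form $\sigma_0=1324$ has $l(\sigma_0)=1$, so the block swap contributes $2$, not $4$. Consequently your proposed key lemma, namely that for each \emph{fixed} $\sigma_0\in\Pi$ one has
\begin{equation*}
\sum_{w\in S_n} q^{4l(w)}\,b'_{\sigma_0 w(1)\sigma_0 w(2)}\cdots b'_{\sigma_0 w(2n-1)\sigma_0 w(2n)}=[n]_{q^4}!\;b'_{\sigma_0(1)\sigma_0(2)}\cdots b'_{\sigma_0(2n-1)\sigma_0(2n)},
\end{equation*}
is both not what the sum actually produces (the reversed term for the partition $\{1,3\},\{2,4\}$ carries relative weight $q^2$, not $q^4$) and not derivable from \eqref{reb2}: for a single pair-partition it would amount to $b'_{kl}b'_{ij}=b'_{ij}b'_{kl}$, which the three-term Maya relation does not give.

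The mechanism is necessarily different: the three correction terms in \eqref{reb2} live in \emph{different} pair-partitions, so the cancellation cannot be organized partition-by-partition (i.e., for fixed $\sigma_0$ while varying only $w$); one must group all $\sigma_0\in\Pi$ whose pairs are supported on a common $4$-element set and apply \eqref{reb2} to that aggregate. For $n=2$ the identity $\sum_{\sigma\in\Pi'}q^{l(\sigma)}b'_{\sigma(1)\sigma(2)}b'_{\sigma(3)\sigma(4)}=(1+q^4)\sum_{\sigma\in\Pi}q^{l(\sigma)}b'_{\sigma(1)\sigma(2)}b'_{\sigma(3)\sigma(4)}$ is, after moving everything to one side and dividing by $q^4$, \emph{exactly} the relation \eqref{reb2} with $(i,j,k,l)=(1,2,3,4)$ --- all six terms, drawn from all three partitions, are needed at once. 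This is the computation that the paper (following the method of \cite{JZ}, where the general case is handled by induction using the iterative expansion of $\Hf_q$) relies on; your write-up gestures at "pairing against other $w'\in S_n$," but keeping $\sigma_0$ fixed makes that impossible, so the argument as structured cannot close.
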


Then $\Hf_q$ can be evaluated inductively as follows.
\begin{equation}
\Hf_q(B')=\sum_{j=2}^{2n}q^{j-2}\Hf_q({B'}^{1j}_{1j})\Hf_q({B'}^{2, \ldots,\hat{j},\ldots,2n}_{2, \ldots,\hat{j},\ldots,2n}).
\end{equation}

Moreover, when the matrix elements $b_{ij}'$ are special quadratic elements in $\mbox{GL}_{q, -q^{-1}}$, we can also express $\Hf_q$
in terms of $\per_q$. The following is an immediate consequence of Theorem \ref{T:per-haf}.
\begin{proposition}\label{P:per-haf} In the quantum coordinate ring $\mathcal A_{q, -q^{-1}}(2n)$ we have
\begin{align*}
{\Hf}_{q}(A\mathbb J_{q}A^T)&={\per}_{q}(A),
\end{align*}
where
$A\mathbb J_{q}A^T=B'$ is given by ($i<j$)
$$b_{ij}'=\sum_{m=1}^n\per_{q}(A^{i, j}_{2m-1, 2m})=\sum_{m=1}^n(a_{i,2m-1}a_{j,2m}+qa_{j,2m-1}a_{i,2m}).
$$
\end{proposition}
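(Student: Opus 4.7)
The plan is to deduce the proposition as a direct specialization of Theorem \ref{T:per-haf}. Take the two-parameter semigroup $\mathcal A_{r,s}$ with $r=q$ and $s=-q^{-1}$, so that $s^{-1}=-q$ and $\mathbb J_{r}=\mathbb J_{q}$. With these parameters Theorem \ref{T:per-haf} reads
\[
\Pf_{-q}(A\mathbb J_{q}A^T)=\cdet_{-q}(A),
\]
and what remains is to identify the two sides with $\Hf_{q}$ and $\per_{q}$ respectively, and to reconcile the two expressions for the entries $b'_{ij}$.

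First I would compare the combinatorial sums. From \eqref{e:qper}, $\cdet_{s^{-1}}(A)=\sum_{\sigma}(-s)^{-l(\sigma)}a_{\sigma(1),1}\cdots a_{\sigma(n),n}$; the substitution $s=-q^{-1}$ gives $(-s)^{-l(\sigma)}=(q^{-1})^{-l(\sigma)}=q^{l(\sigma)}$, which matches \eqref{e:qper2} and yields $\cdet_{-q}(A)=\per_{q}(A)$. Likewise the defining formula for $\Pf_{s^{-1}}(B')$ becomes, after the same substitution, $[n]_{s^{-4}}!=[n]_{q^4}!$ and $(-s)^{-l(\sigma)}=q^{l(\sigma)}$, which is exactly the definition \eqref{e:qhf2} of $\Hf_q(B')$.

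Next I would confirm that the quadratic entries of $A\mathbb J_q A^T$ match the form written in the proposition. Theorem \ref{T:per-haf} with $r=q$ gives $b'_{ij}=\sum_{m}(a_{i,2m-1}a_{j,2m}-q\,a_{i,2m}a_{j,2m-1})$, whereas the proposition records $b'_{ij}=\sum_{m}(a_{i,2m-1}a_{j,2m}+q\,a_{j,2m-1}a_{i,2m})$. For $i<j$ and $2m-1<2m$, relation \eqref{relation a3} specialized to $r=q$, $s^{-1}=-q$ reads
\[
q\,a_{i,2m}a_{j,2m-1}=-q\,a_{j,2m-1}a_{i,2m},
\]
so $-q\,a_{i,2m}a_{j,2m-1}=+q\,a_{j,2m-1}a_{i,2m}$, reconciling the two expressions term by term.

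Combining the three identifications, Theorem \ref{T:per-haf} instantly becomes $\Hf_{q}(A\mathbb J_{q}A^T)=\per_{q}(A)$. The only delicate point is the sign/ordering bookkeeping in the third step: one must track that the single application of \eqref{relation a3} converts the sign $-q$ into $+q$ while simultaneously swapping $a_{i,2m}a_{j,2m-1}$ into $a_{j,2m-1}a_{i,2m}$. Once this is checked the proposition follows with no further computation.
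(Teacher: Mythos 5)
Your proposal is correct and follows the paper's own route: the paper presents Proposition \ref{P:per-haf} as an immediate consequence of Theorem \ref{T:per-haf} under the specialization $r=q$, $s=-q^{-1}$, which is exactly what you carry out. Your verifications that $\cdet_{-q}=\per_q$, $\Pf_{-q}=\Hf_q$, and that relation \eqref{relation a3} reconciles the two forms of $b'_{ij}$ simply make explicit the details the paper leaves unstated.
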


\bigskip
\centerline{\bf Acknowledgments}
The first author would like to thank Alex Molev for stimulating discussions on related topics.
We also thank Ken Goodearl for his interest on the paper. The authors are grateful to the
referee for encouragement to upgrade the results from the one-parameter quantum group
to the two-parameter analogue.

\vskip 0.1in

\bibliographystyle{amsalpha}

\end{document}